\documentclass[11pt]{article}
\usepackage{amsmath}
\usepackage{amssymb}
\usepackage{amsthm}
\usepackage[usenames]{color}
\usepackage{amscd}
\usepackage{indentfirst}
\usepackage[colorlinks=true,linkcolor=webgreen,filecolor=webbrown,
citecolor=webgreen]{hyperref}
\definecolor{webgreen}{rgb}{0,.5,0}
\definecolor{webbrown}{rgb}{.6,0,0}

\hoffset=-.7truein \voffset=-.6truein \textwidth=160mm
\textheight=210mm

\def\1{{\bf 1}}

\def\N{{\Bbb N}}
\def\Z{{\Bbb Z}}

\def\divid{\,|\,}

\def\lcm{\operatorname{lcm}}

\newtheorem{corollary}{Corollary}

\newtheorem{proposition}{Proposition}
\newtheorem{remark}{Remark}

\begin{document}

\title{\bf Averages of Ramanujan sums: \\ Note on two papers by E. Alkan}
\author{L\'aszl\'o T\'oth}
\date{}
\maketitle

\centerline{Ramanujan J. {\bf 35} (2014), 149--156}

\begin{abstract} We give a simple proof and a multivariable generalization of an identity due to
E. Alkan concerning a weighted average of
the Ramanujan sums. We deduce identities for other weighted averages of the Ramanujan
sums with weights concerning logarithms, values of arithmetic functions for gcd's, the Gamma function, the Bernoulli
polynomials and binomial coefficients.
\end{abstract}

{\sl 2010 Mathematics Subject Classification}: 11A25, 11B68, 33B15

{\sl Key Words and Phrases}: Ramanujan's sum, Jordan's function, Bernoulli numbers and polynomials,
Gamma function


\section{Introduction}

Let $c_k(j)$ denote the Ramanujan sums defined for  $k\in
\N :=\{1,2,\ldots \}$ and $j\in \Z$ by
\begin{equation*}
c_k(j):= \sum_{\substack{m=1\\ \gcd(m,k)=1}}^k \exp(2\pi imj/k).
\end{equation*}

Other notations used throughout this note are the following:
${\lfloor x \rfloor}$ is the integer part of $x$, $B_m$ ($m\in \N \cup \{0\}$) are
the Bernoulli numbers, $\varphi$ is Euler's totient function,
$\tau(n)$ and $\sigma(n)$ stand for the  number and the sum of the divisors
of $n$, respectively, $\mu$ is the M\"obius function,
$\Lambda$ is the von Mangoldt function, $*$ denotes the Dirichlet convolution of arithmetical functions.
Other notations will be fixed inside the note.

E. Alkan \cite{Alk2012RJ} considered for $r\in \N$ the weighted
average
\begin{equation} \label{def_weighted_average}
S_r(k):= \frac1{k^{r+1}} \sum_{j=1}^k j^r c_k(j),
\end{equation}
being motivated by the use of \eqref{def_weighted_average}
in proving exact formulas for certain mean square averages of
special values of $L$-functions. See \cite{Alk2011}. He proved an
asymptotic formula for $\sum_{k\le x} S_r(k)$ (\cite[Th.\
1]{Alk2012RJ}), based on the following identity.

\begin{proposition} \label{Th_original_id} {\rm (\cite[Eq. 2.19]{Alk2012RJ})} For every $k,r\in \N$,
\begin{equation} \label{original_id}
S_r(k)= \frac{\varphi(k)}{2k} + \frac1{r+1} \sum_{m=0}^{\lfloor r/2
\rfloor} \binom{r+1}{2m} B_{2m} \prod_{p\divid k}
\left(1-\frac1{p^{2m}}\right).
\end{equation}
\end{proposition}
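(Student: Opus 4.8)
The plan is to start from the familiar divisor representation of the Ramanujan sum, $c_k(j)=\sum_{d\divid \gcd(k,j)} d\,\mu(k/d)$, substitute it into the defining sum \eqref{def_weighted_average}, and interchange the order of summation. Writing the condition $d\divid \gcd(k,j)$ as $d\divid k$ together with $d\divid j$, the inner sum becomes $\sum_{\substack{1\le j\le k\\ d\divid j}} j^r = d^r\sum_{i=1}^{k/d} i^r$ after setting $j=di$. This reduces $S_r(k)$ to
$$S_r(k)=\frac1{k^{r+1}}\sum_{d\divid k} d^{r+1}\mu(k/d)\sum_{i=1}^{k/d} i^r.$$

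Next I would invoke Faulhaber's formula $\sum_{i=1}^{n} i^r=\frac1{r+1}\sum_{m=0}^{r}\binom{r+1}{m}B_m\,n^{r+1-m}$ (in the convention $B_1=+1/2$) with $n=k/d$. Using $d^{r+1}(k/d)^{r+1-m}=d^m k^{r+1-m}$ and pulling the sum over $d$ inside, the double sum separates as
$$S_r(k)=\frac1{r+1}\sum_{m=0}^{r}\binom{r+1}{m}B_m\,\frac1{k^m}\sum_{d\divid k} d^m\mu(k/d).$$
The key observation is that the inner divisor sum is exactly the Dirichlet convolution $(\operatorname{Id}_m*\mu)(k)=J_m(k)$, Jordan's totient, so that $k^{-m}\sum_{d\divid k}d^m\mu(k/d)=\prod_{p\divid k}(1-p^{-m})$ for every $m\ge 0$; the degenerate case $m=0$ gives $\sum_{d\divid k}\mu(k/d)=\prod_{p\divid k}(1-1)$, the indicator of $k=1$, so the formula remains uniformly valid.

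Finally I would sort the terms of $\frac1{r+1}\sum_{m=0}^{r}\binom{r+1}{m}B_m\prod_{p\divid k}(1-p^{-m})$ by the parity of $m$. Since $B_m=0$ for every odd $m\ge 3$, only $m=1$ and the even indices survive. The $m=1$ term equals $\frac1{r+1}\binom{r+1}{1}\cdot\frac12\cdot\prod_{p\divid k}(1-1/p)=\frac{\varphi(k)}{2k}$, producing the leading term, while the even indices $m=2m'$ give precisely $\frac1{r+1}\sum_{m'=0}^{\lfloor r/2\rfloor}\binom{r+1}{2m'}B_{2m'}\prod_{p\divid k}(1-p^{-2m'})$, which is \eqref{original_id}. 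The one step I would treat with care — and the only genuine subtlety — is the Bernoulli convention: Faulhaber's formula for $\sum_{i=1}^{n} i^r$ forces $B_1=+1/2$, and it is exactly this $m=1$ term that supplies the $\varphi(k)/(2k)$ summand. Since every remaining contribution carries even index, where the two sign conventions for $B_1$ agree, formula \eqref{original_id} is insensitive to this choice.
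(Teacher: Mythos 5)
Your proposal is correct and follows essentially the same route as the paper's proof: substituting the divisor representation $c_k(j)=\sum_{d\divid \gcd(k,j)}d\,\mu(k/d)$, interchanging summation, applying Faulhaber's formula, and identifying the resulting M\"obius convolutions as $\varphi(k)/k$ and $J_{2m}(k)/k^{2m}$. The only cosmetic difference is that the paper splits off the $n^r/2$ term of the power-sum formula before summing over divisors, whereas you carry the full Bernoulli sum (in the $B_1=+1/2$ convention) through and separate the $m=1$ term at the end; your explicit handling of the degenerate $m=0$ case and of the Bernoulli sign convention is careful and accurate.
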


Note that $\prod_{p\divid k} (1-p^{-2m})=J_{2m}(k)k^{-2m}$, where
$J_{2m}$ is the Jordan function of order $2m$. For the proof of
\eqref{original_id} E. Alkan used H\"older's evaluation of the
Ramanujan sums given by
\begin{equation} \label{Holder}
c_k(j)= \frac{\varphi(k)\mu(k/\gcd(k,j))}{\varphi(k/\gcd(k,j))}
\quad (k\in \N, j\in \Z),
\end{equation}
applied the formula
\begin{equation} \label{rel_prime}
\sum_{\substack{j=1\\ \gcd(j,n)=1}}^n j^r = \frac{n^{r+1}}{r+1}
\sum_{m=0}^{\lfloor r/2 \rfloor} \binom{r+1}{2m}
\frac{B_{2m}}{n^{2m}} \prod_{p \divid n} \left(1-p^{2m-1}\right) \quad
(n,r\in \N, n>1)
\end{equation}
(see \cite[Cor.\ 4]{Sin2009}), and then considered the cases $r$
even and $r$ odd, respectively. The same identity
\eqref{original_id} and the same proof were presented by E. Alkan
also in \cite[Proof of Th. 1]{Alk2012IJNT}.

In this note we offer a more simple proof of \eqref{original_id}. Furthermore, we
establish identities for other weighted averages of the Ramanujan
sums with weights concerning logarithms, values of arithmetic functions for gcd's, the Gamma function, the Bernoulli
polynomials and binomial coefficients. It is possible to derive similar formulas for the
corresponding weighted averages of $|c_k(j)|$ and $(c_k(j))^2$, but we will not go into details.
We remark that properties of the polynomials $\sum_{j=0}^{k-1} c_k(j)x^j$ were
investigated by the author in \cite{Tot2010}. We also present a multivariable generalization of the formula
\eqref{original_id} connected to the ``orbicyclic'' arithmetic function, discussed by V.~A.~Liskovets
\cite{Lis2010} and the author \cite{Tot2012}.

\section{Simple proof of Proposition \ref{Th_original_id}}

To derive \eqref{original_id} use the familiar formula (see, e.g., \cite[Prop.\ 10.1.6]{Coh2007}, \cite[Th.\ 271]{HarWri2008}),
\begin{equation} \label{Raman_sum_eval}
c_k(j)= \sum_{d\divid \gcd(k,j)} d\, \mu(k/d) \quad (k\in \N, j\in
\Z).
\end{equation}

We obtain
\begin{equation*}
S_r(k)= \frac1{k^{r+1}} \sum_{j=1}^k j^r \sum_{d\divid \gcd(k,j)} d\,
\mu(k/d) = \frac1{k^{r+1}} \sum_{d\divid k} d^{r+1} \mu(k/d)
\sum_{m=1}^{k/d} m^r
\end{equation*}
\begin{equation*}
= \sum_{d\divid k} \frac{\mu(d)}{d^{r+1}} \sum_{m=1}^d m^r.
\end{equation*}

It is well known that for every $n,r\in \N$ (see, e.g., \cite[Prop.\ 9.2.12]{Coh2007}, \cite[Sect.\ 3.9]{Com1974}),
\begin{equation*}
\sum_{j=1}^n j^r = \frac1{r+1} \sum_{m=0}^r (-1)^m \binom{r+1}{m}
B_m n^{r+1-m}
\end{equation*}
\begin{equation*} \label{sum_r}
=\frac{n^r}{2} + \frac1 {r+1} \sum_{m=0}^{\lfloor r/2 \rfloor}
\binom{r+1}{2m} B_{2m} n^{r+1-2m}.
\end{equation*}

We deduce
\begin{equation*}
S_r(k)= \sum_{d\divid k} \frac{\mu(d)}{d^{r+1}} \left( \frac{d^r}{2} +
\frac1{r+1} \sum_{m=0}^{\lfloor r/2 \rfloor} \binom{r+1}{2m} B_{2m}
d^{r+1-2m} \right)
\end{equation*}
\begin{equation*}
= \frac1{2} \sum_{d\divid k} \frac{\mu(d)}{d} + \frac1{r+1}
\sum_{m=0}^{\lfloor r/2 \rfloor} \binom{r+1}{2m} B_{2m} \sum_{d\divid
k} \frac{\mu(d)}{d^{2m}},
\end{equation*}
giving \eqref{original_id} by using the elementary convolutional identities on
$\varphi(k)$ and $J_{2m}(k)$. \qed

Note that the original proof presented in \cite{Alk2012RJ} and \cite{Alk2012IJNT},
based on the application of \eqref{Holder} and
\eqref{rel_prime} can be shortened. Using that
\begin{equation*}
\sum_{m=0}^{\lfloor r/2 \rfloor} \binom{r+1}{2m} B_{2m}
=\frac{r+1}{2},
\end{equation*}
valid for every $r\in \N \cup \{0\}$ it is not necessary to split the
proof into the cases $r$ even and odd.

\section{Other weighted averages}

\begin{proposition} For every $k\in \N$,
\begin{equation*}
\frac1{k} \sum_{j=1}^k (\log j) c_k(j) = \Lambda(k) + \sum_{d\divid k}
\frac{\mu(d)}{d} \log (d!).
\end{equation*}
\end{proposition}

\begin{proof}
We obtain, using formula \eqref{Raman_sum_eval},
\begin{equation*}
\frac1{k} \sum_{j=1}^k (\log j) c_k(j) = \frac1{k} \sum_{j=1}^k
(\log j) \sum_{d\divid \gcd(k,j)} d\mu(k/d)
\end{equation*}
\begin{equation*}
 = \sum_{d\divid k} (d/k)
\mu(k/d) \sum_{m=1}^{k/d} \log (dm)
= \sum_{d\divid k} \frac{\mu(d)}{d} \sum_{m=1}^{d} \log (mk/d)
\end{equation*}
\begin{equation*}
= \sum_{d\divid k} \mu(d) \log(k/d) +\sum_{d\divid k} \frac{\mu(d)}{d}
\log(d!),
\end{equation*}
where the first sum is $\mu*\log =\Lambda$, in terms of the
Dirichlet convolution, and the proof is complete.
\end{proof}

\begin{proposition} Let $f$ be an arbitrary arithmetic function. Then for
every $k\in \N$,
\begin{equation*}
\sum_{j=1}^k f(\gcd(j,k)) c_k(j) = \varphi(k)(\mu*f)(k).
\end{equation*}
\end{proposition}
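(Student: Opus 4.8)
The plan is to exploit the fact that the summand depends on $j$ only through $\gcd(j,k)$: indeed $f(\gcd(j,k))$ manifestly does, and so does $c_k(j)$ by H\"older's evaluation \eqref{Holder}. I would therefore partition the index set $\{1,\dots,k\}$ according to the value $e:=\gcd(j,k)$, which ranges over the divisors of $k$, and write
\begin{equation*}
\sum_{j=1}^k f(\gcd(j,k)) c_k(j) = \sum_{e\divid k} f(e) \sum_{\substack{j=1\\ \gcd(j,k)=e}}^{k} c_k(j).
\end{equation*}

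First I would count and evaluate the inner sum. The integers $j\in\{1,\dots,k\}$ with $\gcd(j,k)=e$ are exactly those of the form $j=e j'$ with $1\le j'\le k/e$ and $\gcd(j',k/e)=1$, so there are $\varphi(k/e)$ of them. For each such $j$, formula \eqref{Holder} gives $c_k(j)=\varphi(k)\mu(k/e)/\varphi(k/e)$, a value independent of $j$ within the class. Hence the inner sum collapses to
\begin{equation*}
\sum_{\substack{j=1\\ \gcd(j,k)=e}}^{k} c_k(j) = \varphi(k/e)\cdot \frac{\varphi(k)\mu(k/e)}{\varphi(k/e)} = \varphi(k)\mu(k/e).
\end{equation*}

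Substituting this back and pulling out the factor $\varphi(k)$ then yields $\varphi(k)\sum_{e\divid k} f(e)\mu(k/e) = \varphi(k)(\mu*f)(k)$, which is the claim.

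The computation has no genuine obstacle; the only point requiring care is the cancellation of the class size $\varphi(k/e)$ against the denominator in \eqref{Holder}, which is precisely what makes the weighted average telescope. Alternatively, one could begin from \eqref{Raman_sum_eval} and interchange the order of summation over the divisors $d$, but that route eventually reproduces the same cancellation and is less direct, so I would favor the grouping-by-gcd argument above.
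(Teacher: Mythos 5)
Your proof is correct and is essentially identical to the paper's: both group the terms by the value of $\gcd(j,k)$, apply H\"older's evaluation \eqref{Holder} within each class, and cancel the class size $\varphi(k/e)$ against the denominator to obtain $\varphi(k)(\mu*f)(k)$. Even your closing remark that one could alternatively start from \eqref{Raman_sum_eval} mirrors the paper's own comment.
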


\begin{proof} Here it is convenient to use H\"older's formula \eqref{Holder}, although the proof works out
also applying \eqref{Raman_sum_eval} instead. We have
\begin{equation*}
T_f(k): =\sum_{j=1}^k f(\gcd(j,k)) c_k(j) = \sum_{j=1}^k
f(\gcd(j,k))
\frac{\varphi(k)\mu(k/\gcd(k,j))}{\varphi(k/\gcd(k,j))},
\end{equation*}
and grouping the terms according to the values of $d=\gcd(k,j)$ we
deduce
\begin{equation*}
T_f(k)= \varphi(k) \sum_{d\divid k} f(d) \frac{\mu(k/d)}{\varphi(k/d)}
\sum_{\substack{m=1\\ \gcd(m,k/d)=1}}^{k/d} 1
\end{equation*}
\begin{equation*}
=\varphi(k) \sum_{d\divid k} f(d) \mu(k/d)= \varphi(k)(\mu*f)(k).
\end{equation*}
\end{proof}

\begin{corollary} For every $k\in \N$,
\begin{equation*}
\sum_{j=1}^k \gcd(j,k) c_k(j) = (\varphi(k))^2.
\end{equation*}
\begin{equation*}
\sum_{j=1}^k \tau(\gcd(j,k)) c_k(j) = \varphi(k).
\end{equation*}
\begin{equation*}
\sum_{j=1}^k \sigma(\gcd(j,k)) c_k(j) = k\varphi(k).
\end{equation*}
\end{corollary}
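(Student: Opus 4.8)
The plan is to derive all three identities as direct applications of Proposition~2 (the statement $\sum_{j=1}^k f(\gcd(j,k)) c_k(j) = \varphi(k)(\mu*f)(k)$), which holds for an arbitrary arithmetic function $f$. Each line of the corollary corresponds to a specific choice of $f$, so the only real work is to identify $\mu * f$ in closed form for the three cases $f(n)=n$, $f(n)=\tau(n)$, and $f(n)=\sigma(n)$.

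First I would treat $f(n)=n$. By Proposition~2 the left-hand side equals $\varphi(k)(\mu * \mathrm{id})(k)$, where $\mathrm{id}(n)=n$. The key step is recognizing the classical convolution identity $\mu * \mathrm{id} = \varphi$, which is simply the dual form of $\varphi = \mathrm{id} * \mu$ coming from $\sum_{d \divid n} \varphi(d) = n$. Substituting gives $\varphi(k)\cdot\varphi(k)=(\varphi(k))^2$, as claimed.

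Next I would take $f(n)=\tau(n)$, so the answer is $\varphi(k)(\mu*\tau)(k)$. Here I use that $\tau = \1 * \1$, where $\1$ is the constant-one function, together with $\mu * \1 = \varepsilon$ (the identity for Dirichlet convolution, i.e.\ the indicator of $n=1$). Associativity then yields $\mu * \tau = \mu * \1 * \1 = \varepsilon * \1 = \1$, so $(\mu*\tau)(k)=1$ and the sum equals $\varphi(k)$. The third case $f(n)=\sigma(n)$ is analogous: since $\sigma = \mathrm{id} * \1$, I compute $\mu * \sigma = \mu * \mathrm{id} * \1 = \varphi * \1$, and $\sum_{d\divid k}\varphi(d)=k$ gives $(\varphi * \1)(k)=k$, whence the sum is $k\varphi(k)$.

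I do not anticipate a genuine obstacle here, since the corollary is a routine specialization; the only point requiring a little care is correctly invoking the right convolutional identities ($\mu*\mathrm{id}=\varphi$, $\mu*\1=\varepsilon$, and $\varphi*\1=\mathrm{id}$) and confirming that $\tau=\1*\1$ and $\sigma=\mathrm{id}*\1$. All three are standard facts about multiplicative functions, so the proof reduces to bookkeeping with Dirichlet convolution rather than any substantive estimation or combinatorial argument.
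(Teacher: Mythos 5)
Your proposal is correct and matches the paper's intent exactly: the corollary is stated without proof precisely because it is the specialization of the preceding proposition to $f=\mathrm{id}$, $f=\tau$, $f=\sigma$, using the standard identities $\mu*\mathrm{id}=\varphi$, $\mu*\tau=\1$, and $\mu*\sigma=\mathrm{id}$, which is exactly what you do. All three convolution computations check out, so there is nothing to add.
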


Let $\Gamma$ be the Gamma function defined for $x>0$ by
\begin{equation*}
\Gamma(x)= \int_0^{\infty} e^{-t}t^{x-1}\, dt.
\end{equation*}

\begin{proposition} For every $k\in \N$, $k>1$,
\begin{equation*}
\frac1{\varphi(k)} \sum_{j=1}^k (\log \Gamma(j/k))
c_k(j) = \frac1{2}\sum_{p\divid k} \frac{\log p}{p-1} -\frac{\log
2\pi}{2}.
\end{equation*}
\end{proposition}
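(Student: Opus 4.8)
The plan is to follow exactly the template of the two preceding proofs: expand $c_k(j)$ by the divisor-sum formula \eqref{Raman_sum_eval} and interchange the order of summation. Writing $c_k(j) = \sum_{d\mid \gcd(k,j)} d\,\mu(k/d)$ and collecting the terms with a fixed $d\mid k$, the admissible $j$ are the multiples $j = dm$ with $1\le m \le k/d$, for which $\log\Gamma(j/k) = \log\Gamma(m/(k/d))$. Hence the whole sum collapses to
\[
\sum_{j=1}^k (\log\Gamma(j/k))\, c_k(j) = \sum_{d\mid k} d\,\mu(k/d)\, G(k/d), \qquad G(n) := \sum_{m=1}^n \log\Gamma(m/n),
\]
and after the substitution $e = k/d$ this becomes $\sum_{e\mid k} (k/e)\,\mu(e)\, G(e)$. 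So everything reduces to a closed form for $G(n)$.

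For that I would invoke the classical Gauss product $\prod_{m=1}^{n-1}\Gamma(m/n) = (2\pi)^{(n-1)/2} n^{-1/2}$; since $\Gamma(1)=1$ contributes nothing to $G(n)$, this gives the clean evaluation $G(n) = \tfrac{n-1}{2}\log(2\pi) - \tfrac12\log n$ (also valid for $n=1$). Substituting and separating the two pieces, the $\log(2\pi)$ part equals $\tfrac{\log 2\pi}{2}\bigl(k\sum_{e\mid k}\mu(e) - k\sum_{e\mid k}\mu(e)/e\bigr)$; here $\sum_{e\mid k}\mu(e)=0$ because $k>1$, while $\sum_{e\mid k}\mu(e)/e = \varphi(k)/k$, so this piece is $-\tfrac{\varphi(k)}{2}\log 2\pi$, which after division by $\varphi(k)$ is precisely the second term of the claim.

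The remaining piece is $-\tfrac{k}{2}\sum_{e\mid k}\frac{\mu(e)\log e}{e}$, and this is the one step that is not purely mechanical, hence the main obstacle. To evaluate $\sum_{e\mid k}\frac{\mu(e)\log e}{e}$ I would differentiate the finite Euler product $P(s) = \sum_{e\mid k}\mu(e)e^{-s} = \prod_{p\mid k}(1-p^{-s})$, using $P'(s) = -\sum_{e\mid k}\mu(e)(\log e)\,e^{-s}$ together with the logarithmic derivative $P'(s)/P(s) = \sum_{p\mid k}\frac{p^{-s}\log p}{1-p^{-s}}$. Evaluating at $s=1$, where $P(1)=\varphi(k)/k$, yields $\sum_{e\mid k}\frac{\mu(e)\log e}{e} = -\frac{\varphi(k)}{k}\sum_{p\mid k}\frac{\log p}{p-1}$, so this piece equals $\frac{\varphi(k)}{2}\sum_{p\mid k}\frac{\log p}{p-1}$. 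Combining the two pieces and dividing by $\varphi(k)$ gives the stated identity. Once the logarithmic-differentiation identity is in place the rest is bookkeeping, and one only needs to keep track that the hypothesis $k>1$ is exactly what kills the $\sum_{e\mid k}\mu(e)$ term.
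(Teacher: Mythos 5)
Your proof is correct and follows essentially the same route as the paper: expand $c_k(j)=\sum_{d\mid\gcd(k,j)}d\,\mu(k/d)$, regroup over divisors, evaluate the inner sum with the Gauss product $\prod_{m=1}^{n}\Gamma(m/n)=(2\pi)^{(n-1)/2}/\sqrt{n}$, and split into the $\log 2\pi$ and $\log d$ pieces, killing the first with $\sum_{e\mid k}\mu(e)=0$ for $k>1$. The only divergence is the final step: where the paper simply cites the identity $\sum_{d\mid k}\frac{\mu(d)}{d}\log d=-\frac{\varphi(k)}{k}\sum_{p\mid k}\frac{\log p}{p-1}$ from the literature, you prove it yourself by logarithmic differentiation of $\prod_{p\mid k}(1-p^{-s})$ at $s=1$, which makes your write-up self-contained but is otherwise the same argument.
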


\begin{proof} It is well known that for every $n\in \N$,
\begin{equation} \label{prod_Gamma}
\prod_{k=1}^n \Gamma(k/n) = \frac{(2\pi)^{(n-1)/2}}{\sqrt{n}},
\end{equation}
which is a consequence of Gauss' multiplication formula, cf., e.g., \cite[Eq.\ (3.10)]{Art1964}, \cite[Prop.\ 9.6.33]{Coh2007}.
We obtain from \eqref{Raman_sum_eval} and \eqref{prod_Gamma} that
\begin{equation*}
\sum_{j=1}^k (\log \Gamma(j/k)) c_k(j) =
\sum_{j=1}^k (\log \Gamma(j/k)) \sum_{d\divid
\gcd(k,j)} d\mu(k/d)
\end{equation*}
\begin{equation*}
= \sum_{d\divid k} d\mu(k/d) \sum_{m=1}^{k/d} \log \Gamma
(md/k) = \sum_{d\divid k} (k/d) \mu(d) \sum_{m=1}^d
\log \Gamma(m/d)
\end{equation*}
\begin{equation*}
= \sum_{d\divid k} (k/d) \mu(d) \log \frac{(2\pi)^{(d-1)/2}}{\sqrt{d}}
\end{equation*}
\begin{equation*}
= \log (2\pi) \sum_{d\divid k} \frac{k}{d} \mu(d)\frac{d-1}{2}-
\frac1{2}\sum_{d\divid k} \frac{k}{d}\mu(d)\log d
\end{equation*}
\begin{equation*}
= \log (2\pi) \left( \frac{k}{2} \sum_{d\divid k}\mu(d) -
\frac1{2}\sum_{d\divid k} \frac{k}{d}\mu(d) \right)
-\frac{k}{2}\sum_{d\divid k} \frac{\mu(d)}{d}\log d,
\end{equation*}
where the first sum is zero for $k>1$ and the second sum is
$\varphi(k)$. Now the use of the identity
\begin{equation*}
\sum_{d\divid k} \frac{\mu(d)}{d}\log d = -\frac{\varphi(k)}{k}
\sum_{p\divid k} \frac{\log p}{p-1},
\end{equation*}
(see, e.g., \cite{Problem_AMM1964}, \cite[Ex.\ 10.8.45]{Coh2007}) completes the proof.
\end{proof}

\begin{proposition} For every $k\in \N$,
\begin{equation} \label{binom}
\frac1{2^k} \sum_{j=0}^k \binom{k}{j} c_k(j) =  \sum_{d\divid k}
\mu(k/d) \sum_{\ell=1}^d (-1)^{\ell k/d} \cos^k (\ell\pi/d).
\end{equation}
\end{proposition}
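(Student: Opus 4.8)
The plan is to follow the same strategy as in the preceding propositions: substitute the expansion \eqref{Raman_sum_eval} for $c_k(j)$ and interchange the order of summation. Since every divisor $d$ of $\gcd(k,j)$ is in particular a divisor of $k$, this yields
\begin{equation*}
\sum_{j=0}^k \binom{k}{j} c_k(j) = \sum_{d \divid k} d\,\mu(k/d) \sum_{\substack{j=0 \\ d \divid j}}^{k} \binom{k}{j}.
\end{equation*}
Thus the whole identity reduces to evaluating the inner sum of binomial coefficients taken over the multiples of $d$ in $\{0,1,\dots,k\}$.

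The key step is to compute this inner sum by a roots-of-unity filter. Writing $\omega=\exp(2\pi i/d)$, the indicator of the condition $d \divid j$ is $\frac1d\sum_{\ell=0}^{d-1}\omega^{\ell j}$, so by the binomial theorem
\begin{equation*}
\sum_{\substack{j=0 \\ d \divid j}}^{k} \binom{k}{j} = \frac1d \sum_{\ell=0}^{d-1} \sum_{j=0}^k \binom{k}{j}\omega^{\ell j} = \frac1d \sum_{\ell=0}^{d-1} (1+\omega^\ell)^k.
\end{equation*}
I would then factor $1+\omega^\ell = 2\exp(\pi i \ell/d)\cos(\pi\ell/d)$, giving $(1+\omega^\ell)^k = 2^k \exp(\pi i\ell k/d)\cos^k(\pi\ell/d)$. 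Here I use crucially that $d \divid k$ in the outer sum, so that $\ell k/d\in\Z$ and the exponential collapses to the sign $(-1)^{\ell k/d}$; hence the inner sum equals $\frac{2^k}{d}\sum_{\ell=0}^{d-1}(-1)^{\ell k/d}\cos^k(\pi\ell/d)$.

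Substituting this back and cancelling the factor $d$ against $d\,\mu(k/d)$, the factor $2^{-k}$ of \eqref{binom} appears and one obtains the right-hand side, except that the index runs over $\ell=0,\dots,d-1$ rather than $\ell=1,\dots,d$. To reconcile this I would check that the summand $(-1)^{\ell k/d}\cos^k(\pi\ell/d)$ is periodic in $\ell$ with period $d$: replacing $\ell$ by $\ell+d$ multiplies $\cos^k(\pi\ell/d)$ by $(-1)^k$, since $\cos(\pi(\ell+d)/d)=-\cos(\pi\ell/d)$, and multiplies $(-1)^{\ell k/d}$ by $(-1)^k$, so the two sign changes cancel. Therefore $\{0,\dots,d-1\}$ and $\{1,\dots,d\}$ are interchangeable as summation ranges, and the identity follows.

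I expect the only genuine subtlety to be the passage from the complex exponential $\exp(\pi i\ell k/d)$ to the real sign $(-1)^{\ell k/d}$, which rests on the integrality of $k/d$ guaranteed by $d \divid k$; the periodicity argument needed to match the stated summation range is then routine.
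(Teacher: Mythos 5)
Your proof is correct. It shares the paper's skeleton --- expand $c_k(j)$ by \eqref{Raman_sum_eval}, interchange the order of summation, and reduce to evaluating $\sum_{d\divid j,\, 0\le j\le k}\binom{k}{j}$ --- but it diverges at the key step. The paper quotes Comtet's identity
\begin{equation*}
\sum_{m=0}^{\lfloor n/r \rfloor} \binom{n}{mr} = \frac{2^n}{r}\sum_{\ell=1}^r \cos^n(\ell\pi/r)\cos(n\ell\pi/r),
\end{equation*}
valid for all $n,r\in\N$, and only afterwards uses $d\divid k$ to rewrite $\cos(k\ell\pi/d)$ as $(-1)^{\ell k/d}$; you instead derive the needed special case from scratch by a roots-of-unity filter, invoking $d\divid k$ already at the stage where $(1+\omega^\ell)^k = 2^k e^{\pi i \ell k/d}\cos^k(\pi\ell/d)$ collapses to a real quantity, and then repair the discrepancy between the ranges $\ell=0,\dots,d-1$ and $\ell=1,\dots,d$ by the periodicity of the summand modulo $d$. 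What each route buys: the paper's argument is shorter on the page but rests on an external reference (and on a more general identity than is actually needed, since only the case $r\divid n$ enters); yours is fully self-contained and makes transparent exactly where the sign $(-1)^{\ell k/d}$ comes from, at the price of the extra --- but, as you say, routine --- periodicity check, which the paper sidesteps entirely because the cited formula is already stated with the index running over $1,\dots,r$. Both handlings of the range issue are sound; note that your periodicity argument implicitly also covers the endpoint terms ($\ell=0$ versus $\ell=d$ both contribute $1$), which is the one place a careless range shift could have gone wrong.
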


Note the symmetry property $\binom{k}{j} c_k(j)= \binom{k}{k-j} c_k(k-j)$, valid for every
$0\le j\le k$.

\begin{proof} By using \eqref{Raman_sum_eval} and the formula
\begin{equation*}
\sum_{k=0}^{\lfloor n/r \rfloor}  \binom{n}{kr} = \frac{2^n}{r}
\sum_{j=1}^r \cos^n (j\pi/r) \cos (nj\pi/r) \qquad (n,r\in \N),
\end{equation*}
cf., e.g., \cite[p.\ 84]{Com1974}, we conclude
\begin{equation*}
\sum_{j=0}^k \binom{k}{j} c_k(j) = \sum_{j=0}^k \binom{k}{j} \sum_{d\divid
\gcd(k,j)} d\mu(k/d) =  \sum_{d\divid k} d\mu(k/d) \sum_{m=0}^{k/d}  \binom{k}{dm}
\end{equation*}
\begin{equation*}
= 2^k  \sum_{d\divid k} \mu(k/d) \sum_{\ell=1}^d \cos^k (\ell \pi/d) \cos (k\ell \pi/d),
\end{equation*}
where the last factor is $(-1)^{\ell k/d}$. This gives \eqref{binom}.
\end{proof}

Let $B_m(x)$ ($m\in \N \cup \{0\}$) be the Bernoulli polynomials
defined by the expansion
\begin{equation*}
\frac{te^{xt}}{e^t-1}= \sum_{m=0}^{\infty} B_m(x) \frac{t^m}{m!}.
\end{equation*}

It is well known (see, e.g., \cite[Prop.\ 9.1.3]{Coh2007}) that for
every $k,m\in \N$,
\begin{equation*}
\sum_{j=0}^{k-1} B_m(j/k) = \frac{B_m}{k^{m-1}}.
\end{equation*}

We obtain by \eqref{Raman_sum_eval}, similarly as in the proofs of above the next formula.

\begin{proposition} For every $k,m\in \N$,
\begin{equation*}
\sum_{j=0}^{k-1} B_m(j/k) c_k(j) = \frac{B_m}{k^{m-1}} J_m(k).
\end{equation*}
\end{proposition}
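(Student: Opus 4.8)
The plan is to follow exactly the template of the preceding proofs: expand $c_k(j)$ via the divisor-sum formula \eqref{Raman_sum_eval}, interchange the order of summation, and reduce the inner sum to the quoted averaging identity for the Bernoulli polynomials. First I would write
\begin{equation*}
\sum_{j=0}^{k-1} B_m(j/k) c_k(j) = \sum_{j=0}^{k-1} B_m(j/k) \sum_{d \divid \gcd(k,j)} d\, \mu(k/d),
\end{equation*}
and then group the terms according to $d \divid k$. Since $d \divid \gcd(k,j)$ forces $d \divid j$, for each fixed $d \divid k$ the index $j$ runs through the multiples $j = d\ell$ with $\ell = 0, 1, \ldots, k/d-1$, giving
\begin{equation*}
\sum_{j=0}^{k-1} B_m(j/k) c_k(j) = \sum_{d \divid k} d\, \mu(k/d) \sum_{\ell=0}^{k/d-1} B_m\!\left(\frac{\ell}{k/d}\right),
\end{equation*}
where I have used $d\ell/k = \ell/(k/d)$.

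Next I would apply the quoted identity $\sum_{\ell=0}^{n-1} B_m(\ell/n) = B_m/n^{m-1}$ with $n = k/d$, so that the inner sum equals $B_m (k/d)^{1-m} = B_m d^{m-1}/k^{m-1}$. Substituting this back gives
\begin{equation*}
\sum_{j=0}^{k-1} B_m(j/k) c_k(j) = \frac{B_m}{k^{m-1}} \sum_{d \divid k} d^m \mu(k/d).
\end{equation*}
Finally I would recognize the remaining divisor sum as the Dirichlet convolution $(\mathrm{Id}^m * \mu)(k) = J_m(k)$, which is the standard expression for the Jordan function of order $m$; this yields the claimed formula.

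As for obstacles, there is essentially none beyond bookkeeping: the calculation is a direct parallel of the earlier arguments. The only points requiring care are the correct index range ($j$ from $0$ to $k-1$ rather than $1$ to $k$, matching the summation range in the averaging identity) and the notational clash between the Bernoulli subscript $m$ and a summation index, which I avoid by summing over $\ell$. The substitution $j = d\ell$ together with the rescaling $j/k = \ell/(k/d)$ is precisely what makes the Bernoulli averaging identity applicable, and identifying $\sum_{d \divid k} d^m \mu(k/d)$ with $J_m(k)$ completes the argument.
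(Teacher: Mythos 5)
Your proof is correct and is exactly the argument the paper intends: the paper only says the result follows from \eqref{Raman_sum_eval} ``similarly as in the proofs above,'' and your expansion of $c_k(j)$, the substitution $j=d\ell$, the Bernoulli averaging identity with $n=k/d$, and the identification $\sum_{d\divid k} d^m\mu(k/d)=J_m(k)$ are precisely the steps being alluded to. Nothing to add.
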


\begin{remark} {\rm The Ramanujan sum $j\mapsto c_k(j)$ is the discrete Fourier transform of the
function $n\mapsto \varrho_k(n)$  given below. By
the formula of the inverse discrete Fourier transform we immediately obtain that
\begin{equation*}
\frac1{k} \sum_{j=1}^k \exp(2\pi ijn/k) c_k(j) =
\begin{cases} 1, & \gcd(k,n)=1 \\ 0, & \gcd(k,n)>1 \end{cases} =: \varrho_k(n),
\end{equation*}
valid for every $k,n\in \N$. We refer to \cite{TotHau2011} for details.
}
\end{remark}

\section{A multivariable generalization}

Let $k_1,\ldots,k_n \in \N$ ($n\in \N$) and let
$k:=\lcm(k_1,\ldots,k_n)$. The function of $n$ variables
\begin{equation*} \label{def_func_E}
E(k_1,\ldots,k_n):= \frac1{k}\sum_{j=1}^k c_{k_1}(j)\cdots
c_{k_n}(j)
\end{equation*}
has combinatorial and topological applications, and was investigated
in the papers of V.~A~Liskovets \cite{Lis2010} and of the author
\cite{Tot2012}. Note that all values of $E(k_1,\ldots,k_n)$ are
nonnegative integers. Furthermore, the function $E$ is
multiplicative as a function of several variables (see \cite{Tot2012}
for this concept). Furthermore, it has the following representation (\cite[Prop.\ 3]{Tot2012}):
\begin{equation*}
E(k_1,\ldots,k_n) = \sum_{d_1\divid k_1, \ldots, d_n\divid k_n}
\frac{d_1\mu(k_1/d_1)\cdots d_n\mu(k_n/d_n)}{\lcm(d_1,\ldots,d_n)}.
\end{equation*}

Consider now the average
\begin{equation*}
S_r(k_1,\ldots,k_n):= \frac1{k^{r+1}} \sum_{j=1}^k j^r c_{k_1}(j)\cdots c_{k_n}(j).
\end{equation*}

\begin{proposition} \label{Th_gener} Let $k_1,\ldots,k_n\in \N$, $k=\lcm(k_1,\ldots,k_n)$
and let $r\in \N$. Then
\begin{equation*}
S_r(k_1,\ldots,k_n) = \frac{\varphi(k_1)\cdots \varphi(k_n)}{2k} + \frac1{r+1}
\sum_{m=0}^{\lfloor r/2 \rfloor} \binom{r+1}{2m}
\frac{B_{2m}}{k^{2m}} g_m(k_1,\ldots,k_n),
\end{equation*}
where
\begin{equation*}
g_m(k_1,\ldots,k_n) = \sum_{d_1\divid k_1, \ldots, d_n\divid k_n}
\frac{d_1\mu(k_1/d_1)\cdots
d_n\mu(k_n/d_n)}{(\lcm(d_1,\ldots,d_n))^{1-2m}}
\end{equation*}
is a multiplicative function in $n$ variables.
\end{proposition}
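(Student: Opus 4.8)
The plan is to run the single-variable argument of Section~2 essentially verbatim, the only new feature being that the joint divisibility conditions coming from the $n$ factors must be collapsed into a single $\lcm$. First I would insert the expansion \eqref{Raman_sum_eval} into each factor $c_{k_i}(j)$ and multiply out, which gives
\begin{equation*}
\sum_{j=1}^k j^r c_{k_1}(j)\cdots c_{k_n}(j) = \sum_{d_1\divid k_1,\ldots,d_n\divid k_n} d_1\mu(k_1/d_1)\cdots d_n\mu(k_n/d_n) \sum_{\substack{j=1\\ d_i\divid j\ (1\le i\le n)}}^k j^r.
\end{equation*}
The key point is that the conditions $d_i\divid j$ for all $i$ amount to $L\divid j$ with $L:=\lcm(d_1,\ldots,d_n)$, and that $L\divid k$ because each $d_i\divid k_i\divid k$. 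Thus the inner sum runs over $j=L,2L,\ldots,(k/L)L$.

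Writing $j=Lm$ and pulling out $L^r$, the inner sum becomes $L^r\sum_{m=1}^{k/L}m^r$, and I would apply the power-sum formula of Section~2 with $n$ replaced by $k/L$. Since $L^r(k/L)^{r+1-2m}=k^{r+1-2m}L^{2m-1}$, this gives
\begin{equation*}
L^r\sum_{m=1}^{k/L}m^r = \frac{k^r}{2} + \frac1{r+1}\sum_{m=0}^{\lfloor r/2\rfloor}\binom{r+1}{2m}B_{2m}\,k^{r+1-2m}L^{2m-1}.
\end{equation*}
Substituting back, dividing by $k^{r+1}$ and interchanging the finite sums, the $\tfrac12 k^r$ contribution becomes $\frac1{2k}\prod_{i=1}^n\bigl(\sum_{d_i\divid k_i}d_i\mu(k_i/d_i)\bigr)=\frac{\varphi(k_1)\cdots\varphi(k_n)}{2k}$ by $\mathrm{id}*\mu=\varphi$ applied coordinatewise, while, since $L^{2m-1}=L^{-(1-2m)}$, the Bernoulli contribution is exactly $\frac1{r+1}\sum_{m=0}^{\lfloor r/2\rfloor}\binom{r+1}{2m}\frac{B_{2m}}{k^{2m}}g_m(k_1,\ldots,k_n)$. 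This is the asserted identity.

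The one genuinely non-routine point, which I expect to be the main obstacle, is the multiplicativity of $g_m$. I would regard $g_m$ as a several-variable convolution $g_m=A*B$, where $A(d_1,\ldots,d_n)=d_1\cdots d_n\,\lcm(d_1,\ldots,d_n)^{2m-1}$ and $B(e_1,\ldots,e_n)=\mu(e_1)\cdots\mu(e_n)$. The factor $B$ is multiplicative, being a coordinatewise product of the multiplicative function $\mu$, and $A$ is multiplicative because $\lcm(d_1,\ldots,d_n)^{s}$ splits over coprime blocks of arguments, the $\lcm$ being determined primewise. As the convolution of two multiplicative functions of several variables is again multiplicative (see \cite{Tot2012}), $g_m$ is multiplicative; equivalently, $g_0=E$ and the proof of multiplicativity of $E$ in \cite{Tot2012} applies unchanged with $\lcm^{-1}$ replaced by $\lcm^{2m-1}$. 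As a check, for $n=1$ one has $g_m(k)=\sum_{d\divid k}d^{2m}\mu(k/d)=J_{2m}(k)$, whence $g_m(k)/k^{2m}=\prod_{p\divid k}(1-p^{-2m})$ and the formula reduces to \eqref{original_id}.
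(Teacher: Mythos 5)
Your proof is correct and follows exactly the route the paper intends: the paper's own proof is just the one-line remark ``similar to the proof of Proposition~\ref{Th_original_id}, using \eqref{Raman_sum_eval} for each factor,'' and your argument --- collapsing the conditions $d_i\divid j$ into $\lcm(d_1,\ldots,d_n)\divid j$, applying the power-sum formula, and identifying the two resulting terms --- is precisely that computation spelled out. Your convolution argument for the multiplicativity of $g_m$ (and the sanity check $g_m=J_{2m}$ for $n=1$) fills in a detail the paper leaves implicit, and it is sound.
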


\begin{proof} Similar to the proof of Proposition \ref{Th_original_id} by using  formula \eqref{Raman_sum_eval} for each of
$c_{k_1}(j), \ldots, c_{k_n}(j)$.
\end{proof}

For $n=1$ Proposition \ref{Th_gener} reduces to Proposition \ref{Th_original_id}.

\begin{corollary} {\rm ($r=1$)} For every $k_1,\ldots,k_n\in \N$, with $k=\lcm(k_1,\ldots,k_n)$,
\begin{equation*}
S_1(k_1,\ldots,k_n) = \frac{\varphi(k_1)\cdots \varphi(k_n)}{2k} + \frac{E(k_1,\ldots,k_n)}{2}.
\end{equation*}
\end{corollary}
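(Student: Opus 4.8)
The plan is to specialize Proposition \ref{Th_gener} to the case $r=1$ and then identify the resulting main term with the function $E$. Since $\lfloor r/2 \rfloor = \lfloor 1/2 \rfloor = 0$, the sum over $m$ collapses to its single term $m=0$, so essentially no computation beyond plugging in constants is required.

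First I would read off the $m=0$ data in the general formula. Here $\binom{r+1}{2m} = \binom{2}{0} = 1$, the Bernoulli number is $B_0 = 1$, the power $k^{-2m} = k^0 = 1$, and the prefactor is $\tfrac{1}{r+1} = \tfrac12$. Thus
\begin{equation*}
S_1(k_1,\ldots,k_n) = \frac{\varphi(k_1)\cdots \varphi(k_n)}{2k} + \frac12\, g_0(k_1,\ldots,k_n).
\end{equation*}

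Next I would match $g_0$ against the stated representation of $E$. Setting $m=0$ in the definition of $g_m$ gives
\begin{equation*}
g_0(k_1,\ldots,k_n) = \sum_{d_1\divid k_1, \ldots, d_n\divid k_n}
\frac{d_1\mu(k_1/d_1)\cdots d_n\mu(k_n/d_n)}{(\lcm(d_1,\ldots,d_n))^{1}},
\end{equation*}
which is verbatim the formula $E(k_1,\ldots,k_n) = \sum_{d_1\divid k_1, \ldots, d_n\divid k_n}
\frac{d_1\mu(k_1/d_1)\cdots d_n\mu(k_n/d_n)}{\lcm(d_1,\ldots,d_n)}$ recorded earlier in the excerpt. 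Hence $g_0(k_1,\ldots,k_n) = E(k_1,\ldots,k_n)$, and substituting this into the displayed expression for $S_1$ yields the claimed identity.

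There is no genuine obstacle here: the corollary is a direct specialization, and the only point requiring attention is the clean bookkeeping that the single surviving weight $g_0$ coincides exactly with $E$. One could, if desired, remark that this also explains the combinatorial integrality of the correction term, since all values of $E$ are nonnegative integers.
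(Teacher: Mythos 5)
Your proposal is correct and is exactly the argument the paper intends: the corollary is the $r=1$ specialization of Proposition \ref{Th_gener}, where the $m$-sum collapses to the single term $m=0$ (with $\binom{2}{0}=1$, $B_0=1$, $k^0=1$) and $g_0$ coincides term-by-term with the quoted representation of $E(k_1,\ldots,k_n)$. Nothing is missing; your bookkeeping matches what the paper leaves implicit.
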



\section{Acknowledgement}
The author gratefully acknowledges support from the Austrian Science
Fund (FWF) under the project Nr. M1376-N18.


\medskip

\noindent L\'aszl\'o T\'oth \\
Institute of Mathematics, Universit\"at f\"ur Bodenkultur \\
Gregor Mendel-Stra{\ss}e 33, A-1180 Vienna, Austria \\ and \\
Department of Mathematics, University of P\'ecs \\ Ifj\'us\'ag u. 6,
H-7624 P\'ecs, Hungary \\ E-mail: ltoth@gamma.ttk.pte.hu


\begin{thebibliography}{99}

\bibitem{Alk2011} E.~Alkan, On the mean square average of special values of
$L$-functions, {\it J. Number Theory} {\bf 131} (2011), 1470--1485.

\bibitem{Alk2012RJ} E.~Alkan, Distribution of averages of Ramanujan
sums, {\it Ramanujan J.} {\bf 29} (2012), 385--408.

\bibitem{Alk2012IJNT} E.~Alkan, Ramanujan sums and the Burgess zeta function, {\it Int. J.
Number Theory} {\bf 8} (2012), 2069--2092.

\bibitem{Art1964} E.~Artin, {\it The Gamma Function} (translated by M. Butler), Athena Series:
Selected Topics in Mathematics, Holt, Rinehart and Winston, 1964.

\bibitem{Problem_AMM1964} W.~E.~Briggs and G.~M.~Bergman, Problem 5091, {\it Amer. Math. Monthly}
 {\bf 71} (1964), 334--335.

\bibitem{Coh2007} H.~Cohen, {\it Number theory, Vol. II. Analytic and modern
tools}, Graduate Texts in Mathematics, 240, Springer, 2007.

\bibitem{Com1974} L.~Comtet, {\it Advanced Combinatorics. The Art of Finite and Infinite Expansions},
D. Reidel Publishing Co., 1974.

\bibitem{HarWri2008} G.~H,~Hardy, E.~M.~Wright, {\it An Introduction to the Theory of Numbers}, Sixth Edition, Edited and revised by
D.~R.~Heath-Brown and J.~H.~Silverman, Oxford University Press, 2008.

\bibitem{Lis2010} V.~A.~Liskovets, A multivariate arithmetic function
of combinatorial and topological significance, {\it Integers} {\bf
10} (2010), 155--177.

\bibitem{Sin2009} J.~Singh, Defining power sums of $n$ and $\varphi(n)$ integers,
{\it Int. J. Number Theory} {\bf 5} (2009), 41--53.

\bibitem{Tot2010} L.~T\'oth, Some remarks on Ramanujan sums and cyclotomic polynomials,
{\it Bull. Math. Soc. Sci. Math. Roumanie (N.S.)} {\bf 53(101)}
(2010), 277--292.

\bibitem{Tot2012} L.~T\'oth, Some remarks on a paper of V. A. Liskovets,
{\it Integers} {\bf 12} (2012), 97--111.

\bibitem{TotHau2011} L.~T\'oth and P.~Haukkanen, The discrete Fourier transform of $r$-even functions, {\it Acta Univ. Sapientiae
Math.} {\bf 3} (2011), 5--25.

\end{thebibliography}
\end{document}